\begin{document} 

 \theoremstyle{plain} 
 \newtheorem{theorem}{Theorem}[section] 
 \newtheorem{lemma}[theorem]{Lemma} 
 \newtheorem{corollary}[theorem]{Corollary} 
 \newtheorem{proposition}[theorem]{Proposition} 

\theoremstyle{definition} 
\newtheorem{definition}{Definition}
\newtheorem{example}[theorem]{Example}
\newtheorem{remark}[theorem]{Remark}

\title[Bi-skew braces]{Bi-skew braces and  Hopf Galois structures}

\author{Lindsay N. Childs}
\address{Department of Mathematics and Statistics\\
University at Albany\\
Albany, NY 12222}
\email{lchilds@albany.edu}

\date{\today}

 \begin{abstract} We define a bi-skew brace to be a set $G$ with two group operations $\star$ and $\circ$ so that $(G, \circ, \star)$ is a skew brace  with additive group $(G, \star)$ and also with additive group $(G, \circ)$.  If $G$ is a skew brace, then $G$ corresponds to a Hopf Galois structure of type $(G, \star)$ on any Galois extension of fields with Galois group isomorphic to $(G, \circ)$.  If $G$ is a bi-skew brace, then $G$ also corresponds to a Hopf Galois structure of type $(G, \circ)$ on a Galois extension of fields with Galois group isomorphic to $(G, \star)$.   Many non-trivial examples exist. One source is  radical rings $A$ with $A^3 = 0$,  where one of the groups is abelian and the other need not be.  We find that the left braces of degree $p^3$ classified by Bachiller are bi-skew braces if and only they are radical rings.   A different source of bi-skew braces is semidirect products of arbitrary finite groups, which yield many examples where both groups are non-abelian, and a skew brace proof of a result of Crespo, Rio and Vela that if $G = H\rtimes J$ is a semidirect product of finite groups, then a Galois extension of fields with Galois group $G$ has a Hopf Galois structure of type $H \times J$.
\end{abstract}
\maketitle

\

\

Table of contents:

1.  Introduction, history

2.  Connection with Hopf Galois structures

3.  Bi-skew braces

4.  Bi-skew braces from radical algebras

5.  Bachiller's classification of left braces of order $p^3$

6.  On Bachiller's exponent result

7.  Bi-skew braces and semidirect products

\section{Introduction, history}
\subsection*{Hopf Galois structures}
Let $L/K$ be a $G$-Galois extension of fields:  that is, a Galois extension of fields with Galois group $G= (G, \circ)$.  By E. Artin's classic treatment of Galois theory we know that  $L \otimes L \cong Hom_L(LG, L) = (LG)^*.$  

Chase and Sweedler [CS69] generalized the concept of Galois extension by replacing the group ring of the Galois group by a Hopf algebra, as follows:  suppose $L$ is an $H$-module algebra for $H$ a cocommutative $K$-Hopf algebra $H$.  Then  $L/K$ is an $H$-Hopf Galois extension $L/K$ if
\[L \otimes_K L \cong \mathrm{Hom}_L( L \otimes_K H, L) = (L \otimes H)^*.\]   
Then, as Greither and Pareigis [GP87] observed,  $L\otimes_K L$ 
is a ring direct sum of copies of $L$, so $L \otimes H = LN$ for some subgroup $N$ of $\mathrm{Perm}(G)$ 
 that is normalized by the image $\lambda_\circ(G)$ of the left regular representation $\lambda_\circ:  G \to \mathrm{Perm}(G)$, 
and  $H$ can be recovered by Galois descent:  $H = LN^G$.  

If $L/K$ is a Galois extension with Galois group $G$ and is an $H$-Hopf Galois extension where $L \otimes K H =KN$, then we say that the type of $H$ is the isomorphism type of the group $N$.  

Since $N$ is a regular subgroup of $\mathrm{Perm}(G)$, the map $\varphi: N \to G$ given by $n \mapsto n(e)$ is a bijection, where $e$ is the identity element of $G$.  Let $g = \varphi(n)$.  For $x$ in $G$, define $\lambda_\star:  G \to \mathrm{Perm}(G)$ by \[\lambda_\star(g)(x) = g \star x = \varphi(n)\star x = n(x).\] Then $\star$ is a new group operation on $G$  so that $\varphi:  N \to \lambda_\star (G)$  is an isomorphism, and $\lambda_\star$ is the left regular representation map for this new operation.
 
Thus:
\begin{theorem}\label{111}  Every Hopf Galois structure of type $N$ on a $G = (G, \circ)$-Galois extension yields a second operation $\star$ on $G$ so that $(G, \star) \cong N$.  Then  $\lambda_\circ(G)$ is contained in $\mathrm{Hol}(G, \star)$, the normalizer in $\mathrm{Perm}(G)$ of $\lambda_\star (G)$.   \end{theorem}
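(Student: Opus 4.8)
The plan is to unwind the Greither--Pareigis dictionary recalled just above, most of which has in fact already been set up in the paragraph preceding the statement. The input is a Hopf Galois structure of type $N$ on the $(G,\circ)$-Galois extension $L/K$; by [GP87] this is the same datum as a regular subgroup $N \le \mathrm{Perm}(G)$ normalized by the image $\lambda_\circ(G)$ of the left regular representation of $(G,\circ)$. So I would simply take such an $N$ and manufacture the operation $\star$ from it.

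First I would exploit regularity of $N$: the evaluation map $\varphi\colon N \to G$, $n \mapsto n(e)$, is a bijection, and hence can be used to transport the group law of $N$ onto the underlying set $G$. Concretely, for $g \in G$ write $n_g = \varphi^{-1}(g)$ and set $g \star x = n_g(x)$, exactly as in the displayed definition above. The routine-but-essential check is that $\varphi$ intertwines multiplication: $\varphi(n_g n_h) = (n_g n_h)(e) = n_g(n_h(e)) = n_g(h) = g \star h$. This says that $\varphi$ carries the group structure of $N$ over to $(G,\star)$, so associativity, the identity element (which is $\varphi(\mathrm{id}) = e$, since $\mathrm{id}$ is the unique element of $N$ fixing $e$), and inverses for $\star$ are all forced; in particular $(G,\star)$ is a group and $\varphi\colon N \to (G,\star)$ is an isomorphism. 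Simultaneously the computation shows $\lambda_\star(g) = n_g$ as elements of $\mathrm{Perm}(G)$, so $\lambda_\star$ is just $\varphi^{-1}$ followed by the inclusion $N \hookrightarrow \mathrm{Perm}(G)$; thus $\lambda_\star$ is the left regular representation of $(G,\star)$ and, crucially, $\lambda_\star(G) = N$ as subgroups of $\mathrm{Perm}(G)$. This yields $(G,\star) \cong N$.

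For the final assertion the work is already done: since $\lambda_\star(G) = N$, the normalizer of $\lambda_\star(G)$ in $\mathrm{Perm}(G)$ is by definition $\mathrm{Hol}(G,\star)$, and the Greither--Pareigis hypothesis is precisely that $\lambda_\circ(G)$ normalizes $N$. Hence $\lambda_\circ(G) \subseteq \mathrm{Hol}(G,\star)$.

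I do not anticipate a genuine obstacle; the content is bookkeeping. The one place to stay alert is the double role of the set $G$: $(G,\circ)$ is the honest Galois group acting through $\lambda_\circ$, while $(G,\star)$ is the transported structure with $\lambda_\star(G) = N$, and one must confirm that $g\star x = n_g(x)$ involves no hidden choice (it does not, as $\varphi$ is an honest bijection) and that the two operations share the identity $e$. Beyond that, the proof is the string of equalities $\varphi(n_g n_h) = n_g(n_h(e)) = n_g(h) = g\star h$ together with the tautology that normalizing $N$ means lying in $\mathrm{Hol}(G,\star)$.
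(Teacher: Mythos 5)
Your proof is correct and follows essentially the same route as the paper, which establishes the theorem via exactly this transport of structure: the bijection $\varphi\colon N\to G$, $n\mapsto n(e)$, the definition $g\star x=n(x)$, and the observation that $\lambda_\star(G)=N$ so that the normalizing condition from Greither--Pareigis becomes $\lambda_\circ(G)\subseteq\mathrm{Hol}(G,\star)$. Your write-up in fact supplies slightly more detail (the explicit check that $\varphi$ is multiplicative) than the paper, which leaves that verification implicit.
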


If a $G$-Galois extension has a Hopf Galois structure of type $N$, we'll say that the ordered pair $(G, N)$ of abstract groups (of equal order) is realizable. 

There has been much interest since the mid '90s around the qualitative question of realizability, and if so,  the quantitative question of counting the number of Hopf Galois structures of type $N$ on a $G$-Galois extension.

\subsection*{Skew left braces}

\begin{definition}  A  skew left  brace (or for short, skew brace) is a finite set $B$ with two operations, $\star$ and $\circ$, so that $(B, \star)$ is a group (the ``additive group''), $(B, \circ)$ is a group, and the compatibility condition
\[ a \circ (b \star c) = (a \circ b) \star a^{-1} \star (a \circ c) \]
holds for all $a, b, c$ in $B$.  Here $a^{-1}$ is the inverse of $a$ in $(B, \star)$. Denote the inverse of $a$ in $(B, \circ)$ by $\overline{a}$.  \end{definition}
If $B$ has two operations $\star$ and $\circ$ and is a skew brace with $(B, \star)$ the additive group, then we write $B = B(\circ, \star)$  (i. e. the additive group operation is on the right).

Skew braces were introduced in [GV17] as a non-commutative generalization of  left braces of [Rum07], which in turn generalize radical algebras.  Skew braces have been studied in [Bac16a] and in [SV18].  Initial interest in braces and skew braces was motivated by the search for set-theoretic solutions of the Yang-Baxter equation.  But there is also a close connection between skew braces and Hopf Galois structures on Galois extensions of fields.  This connection evolved from the discovery by [CDVS06] of the relationship between radical algebras and regular subgroups of the affine group, and its subsequent generalization and application to abelian Hopf Galois structures  on elementary abelian Galois extensions of fields in [FCC12].   Bachiller in [Bac16] observed that the connection in [FCC12] extends to a close relationship between abelian Hopf Galois structures on Galois extensions of fields and left braces.  That relationship  was extended to skew braces and arbitrary Hopf Galois structures in the appendix by Byott and Vendramin in [SV18], and has already been used to study Hopf Galois structures, for example in [Ze18] and [Ch18]. 

Associated to a set $B$ with two group operations $ \circ$ and $\star$  are the two left regular representation maps:
 \[ \begin{aligned} \lambda_\star:  &B\to \mathrm{Perm}(B), \lambda_\star(b)(x) = b \star x,\\
 \lambda_{\circ}: & B \to \mathrm{Perm}(B), \lambda_{\circ}(b)(x) = b \circ x. \end{aligned} \]
Let $\mathcal{L}:  B \to \mathrm{Perm}(B)$ by $\mathcal{L}_b(x) = \lambda_\star(b)^{-1}\lambda_{\circ}(b)(x)$. for $b, x$ in $B$.  Then  [GV17, Proposition 1.9] is
\begin{proposition}\label{3.1}  $(B, \circ, \star)$ is a skew brace if and only if  the group homomorphism  $\lambda_\circ:  (B , \circ) \to \mathrm{Perm}(B)$ has image in $\mathrm{Hol}(B, \star) = \lambda_\star(B) \mathrm{Aut}(B) \subset \mathrm{Perm}(B)$. \end{proposition}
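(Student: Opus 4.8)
The plan is to translate everything into statements about the two left regular representations and the semidirect decomposition $\mathrm{Hol}(B,\star)=\lambda_\star(B)\,\mathrm{Aut}(B,\star)$ supplied in the statement. First I would record the one fact really needed besides definitions: in any skew brace the identities of $(B,\star)$ and $(B,\circ)$ agree — setting $a$ equal to the $\circ$-identity in the compatibility relation forces the $\star$-inverse of that element to be the $\star$-identity — and in the setting of Theorem~\ref{111} this coincidence is built into the definition of $\star$; write $1$ for the common identity. Then $\lambda_\circ(a)=\lambda_\star(a)\,\mathcal{L}_a$ holds by the very definition of $\mathcal{L}$, so $\mathcal{L}_a(x)=a^{-1}\star(a\circ x)$, and each $\mathcal{L}_a\in\mathrm{Perm}(B)$.

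For the direction $(\Rightarrow)$, assume $(B,\circ,\star)$ is a skew brace. The key move is to left-multiply the compatibility identity $a\circ(b\star c)=(a\circ b)\star a^{-1}\star(a\circ c)$ by $a^{-1}$ in $(B,\star)$; regrouping the right-hand side shows this is exactly the assertion $\mathcal{L}_a(b\star c)=\mathcal{L}_a(b)\star\mathcal{L}_a(c)$. So $\mathcal{L}_a$ is a bijective endomorphism of $(B,\star)$, hence $\mathcal{L}_a\in\mathrm{Aut}(B,\star)$, and therefore $\lambda_\circ(a)=\lambda_\star(a)\,\mathcal{L}_a\in\lambda_\star(B)\,\mathrm{Aut}(B,\star)=\mathrm{Hol}(B,\star)$ for every $a$.

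For $(\Leftarrow)$, assume $\lambda_\circ(B)\subseteq\mathrm{Hol}(B,\star)$ and write $\lambda_\circ(a)=\lambda_\star(b)\,\alpha$ with $\alpha\in\mathrm{Aut}(B,\star)$. Evaluating at $1$ and using $\alpha(1)=1$ gives $b=\lambda_\circ(a)(1)=a\circ 1=a$, so $\lambda_\circ(a)=\lambda_\star(a)\,\alpha$ and hence $\alpha=\lambda_\star(a)^{-1}\lambda_\circ(a)=\mathcal{L}_a\in\mathrm{Aut}(B,\star)$. Now one simply runs the computation of the previous paragraph backwards: for all $b,c$,
\begin{align*}
a\circ(b\star c)&=\lambda_\star(a)\,\mathcal{L}_a(b\star c)=a\star\mathcal{L}_a(b)\star\mathcal{L}_a(c)\\
&=a\star\bigl(a^{-1}\star(a\circ b)\bigr)\star\bigl(a^{-1}\star(a\circ c)\bigr)=(a\circ b)\star a^{-1}\star(a\circ c),
\end{align*}
so the compatibility relation holds and $(B,\circ,\star)$ is a skew brace.

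I do not anticipate a genuine obstacle: the argument is entirely a matter of matching the two regular representations against the structure of the holomorph. The only place care is needed — and where a careless proof of $(\Leftarrow)$ breaks down — is the base point: it is precisely the coincidence of the two identities that makes $b=a\circ 1=a$ drop out, pinning $\mathcal{L}_a$ to fix $1$ and so to be an automorphism rather than some other element of $\mathrm{Hol}(B,\star)$. Without that convention $(\Leftarrow)$ is false, so I would state it explicitly at the start.
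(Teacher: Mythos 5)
Your proof is correct and follows essentially the same route as the paper: factor $\lambda_\circ(a)=\lambda_\star(a)\,\mathcal{L}_a$ and observe that the compatibility identity, after left $\star$-multiplication by $a^{-1}$, is precisely the statement that each $\mathcal{L}_a$ is a $\star$-automorphism. Your explicit handling of the base point in the converse (using the coincidence of the two identities to force the translation part of $\lambda_\circ(a)$ to be $\lambda_\star(a)$) is a worthwhile refinement of a step the paper's final ``iff'' leaves implicit, but it does not change the underlying argument.
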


\begin{proof}  For all $b, x, y$ in $B$, the left skew brace property:  
\[ b \circ (x \star y) = (b \circ x) \star b^{-1} \star (b \circ y) \]
holds   iff
\[ b^{-1} \star (b \circ (x \star y)) = (b^{-1}\star (b \circ x)) \star (b^{-1} \star (b \circ y)) ,\]
iff
\[ \lambda_\star^{-1}(b)\lambda_\circ(g)(x \star y)= (\lambda_\star^{-1}(b)\lambda_\circ(b)(x ))\star (\lambda_\star^{-1}(b)\lambda_\circ(b)( y))\]
iff 
\[ \mathcal{L}_b(x \star y) = \mathcal{L}_b(x)*\mathcal{L}_b(y) \]
iff $\mathcal{L}_b$ is in $\mathrm{Aut}(B, \star)$ for all $b$ in $B$, 

\noindent iff  $\lambda_\circ = \lambda_{\star}\mathcal{L}:  B \to \mathrm{Perm}(B)$ has image in $\lambda_\star(B)\mathrm{Aut}(B) = \mathrm{Hol}(B)$.
\end{proof}

For $(G, \circ, \star)$ a skew left brace, the map  
\[ \mathcal{L}:  (G, \circ) \to \mathrm{Aut}(G, \star)\]
defined by $g\mapsto \mathcal{L}_g$ is a group homomorphism:  see [GV17, Proposition 1.9, Corollary 1.10].   

In the brace literature the map $\mathcal{L}_g$ is often denoted by $\lambda_g$.   Here we reserve $\lambda$ for left regular representation maps. 

\section{Connecting skew braces with Hopf Galois structures}

Let $L/K$ be a Galois extension with Galois group $G = (G, \circ)$.  Hopf Galois structures on $L/K$ of a given type $(G, \star)$ correspond by Galois descent [GP87] to regular subgroups $N$ of $\mathrm{Perm}(G)$ isomorphic to $(G, \star)$ and normalized by $\lambda_\circ G$.

\begin{proposition}  Let $L/K$ be a Galois extension with group $G = (G, \circ)$.  Let $H$ be a $K$-Hopf algebra giving a Hopf Galois structure of type $M$ on $L/K$.  Then $(G, \circ)$ has a skew left brace structure with additive group $(G, \star) \cong M$. \end{proposition}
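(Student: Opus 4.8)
The plan is to assemble the statement from the two facts already in hand: the Greither--Pareigis correspondence recalled just before Theorem~\ref{111}, and Proposition~\ref{3.1}. First I would unwind the hypothesis. Since $H$ gives a Hopf Galois structure of type $M$ on the $(G,\circ)$-Galois extension $L/K$, Galois descent produces a regular subgroup $N \le \mathrm{Perm}(G)$ with $N \cong M$, normalized by $\lambda_\circ(G)$, and with $L \otimes_K H \cong LN$. So it suffices to produce, from such an $N$, a second group operation $\star$ on the underlying set $G$ with $(G,\star)\cong N$ for which $(G,\circ,\star)$ is a skew left brace with additive group $(G,\star)$.

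Next I would run the construction used in Theorem~\ref{111}. Because $N$ acts regularly, the evaluation map $\varphi\colon N\to G$, $n\mapsto n(e)$, is a bijection; transporting the group law of $N$ along $\varphi$ gives a binary operation $\star$ on $G$ determined by $g\star x = n(x)$ where $\varphi(n)=g$. By construction $(G,\star)\cong N\cong M$, the identity element is again $e$, and the left regular representation of $(G,\star)$ is exactly $\lambda_\star$ with $\lambda_\star(G)=N$ as subgroups of $\mathrm{Perm}(G)$. The one point requiring care here is that $\star$, so defined, is genuinely a group operation (associative, with identity and inverses) and that $\lambda_\star$ is its left regular representation; this is precisely the content of the sentence preceding Theorem~\ref{111}, and it follows from $N$ being a subgroup of $\mathrm{Perm}(G)$ acting regularly.

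Finally I would feed this into Proposition~\ref{3.1}. The normalizer of $\lambda_\star(G)$ in $\mathrm{Perm}(G)$ is precisely $\mathrm{Hol}(G,\star)=\lambda_\star(G)\mathrm{Aut}(G,\star)$; since $\lambda_\circ(G)$ normalizes $N=\lambda_\star(G)$, we obtain $\lambda_\circ(G)\subseteq \mathrm{Hol}(G,\star)$. Proposition~\ref{3.1} then gives immediately that $(G,\circ,\star)$ is a skew left brace with additive group $(G,\star)$, and we have already observed $(G,\star)\cong M$. As for the main obstacle: there is no deep one, since the proposition is essentially a repackaging of Theorem~\ref{111} and Proposition~\ref{3.1}; the only genuinely delicate step is the bookkeeping in the middle paragraph, namely checking that $\varphi$ intertwines the abstract group $N$ with the operation $\star$ so that $N=\lambda_\star(G)$ on the nose, and that the resulting $\star$ is a bona fide group law rather than merely a binary operation. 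Everything else is a direct citation.
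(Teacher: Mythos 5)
Your proposal is correct and follows exactly the paper's route: the paper proves this proposition in one line by citing Theorem~\ref{111} (the Greither--Pareigis construction of $\star$ from the regular subgroup $N$) together with the characterization of skew braces in Proposition~\ref{3.1}, and your argument is simply that citation written out in full detail. The bookkeeping you flag (that $\varphi$ transports the group law of $N$ to $\star$ so that $N=\lambda_\star(G)$ and that $\mathrm{Hol}(G,\star)$ is the normalizer of $\lambda_\star(G)$) is handled correctly and matches the discussion preceding Theorem~\ref{111}.
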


This follows  from Theorem \ref{111} by the characterization of skew left braces in Proposition \ref{3.1}.

Conversely, 

\begin{proposition}  Let $(G, \circ, \star)$ be a skew brace.  Let $L/K$ be a Galois extension with Galois group $(G, \circ)$.   Then $L/K$ has a Hopf Galois structure of type $(G, \star)$. \end{proposition}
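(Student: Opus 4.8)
The plan is to read the required regular subgroup of $\mathrm{Perm}(G)$ directly off the brace structure, and then feed it into the Greither--Pareigis descent correspondence recalled above. First I would note that for any group operation $\star$ on the underlying set $G$, the left regular representation $\lambda_\star: (G,\star)\to\mathrm{Perm}(G)$ is injective with regular image, so $N:=\lambda_\star(G)$ is a regular subgroup of $\mathrm{Perm}(G)$ with $N\cong (G,\star)$ as abstract groups.

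Next I would invoke Proposition~\ref{3.1}: because $(G,\circ,\star)$ is a skew brace, the homomorphism $\lambda_\circ:(G,\circ)\to\mathrm{Perm}(G)$ has image inside $\mathrm{Hol}(G,\star)$, which by definition is the normalizer of $\lambda_\star(G)=N$ in $\mathrm{Perm}(G)$. Hence $N$ is a regular subgroup of $\mathrm{Perm}(G)$ normalized by $\lambda_\circ(G)$, the image of the left regular representation of the Galois group $(G,\circ)$.

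Finally I would apply the Greither--Pareigis theorem as recalled above: for the Galois extension $L/K$ with group $(G,\circ)$, the Artin identification $L\otimes_K L\cong\mathrm{Map}(G,L)$ turns each regular subgroup $N\subseteq\mathrm{Perm}(G)$ normalized by $\lambda_\circ(G)$ into a Hopf Galois structure on $L/K$, with Hopf algebra recovered by Galois descent as $H=(L[N])^{G}$ (the action of $(G,\circ)$ being by conjugation through $\lambda_\circ(G)$ on $N$ and by the Galois action on $L$). The type of this structure is the isomorphism class of $N$, namely $(G,\star)$, which is exactly the assertion.

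I do not expect a genuine obstacle: the substantive content already lives in Proposition~\ref{3.1} and in the Greither--Pareigis correspondence, and the proof is essentially the composition of the two. The one point deserving a word of care is bookkeeping — making sure that the copy of $G$ on which the brace's regular representations act is identified with the copy of $G$ used to coordinatize $L\otimes_K L$, so that ``normalized by $\lambda_\circ(G)$'' in Proposition~\ref{3.1} is literally the hypothesis needed in the descent theorem. Once that identification is pinned down, the argument is immediate.
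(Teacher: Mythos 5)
Your proposal is correct and follows exactly the same route as the paper: take $N=\lambda_\star(G)$, use Proposition~\ref{3.1} to see that $\lambda_\circ(G)\subseteq\mathrm{Hol}(G,\star)$ normalizes $N$, and apply the Greither--Pareigis descent correspondence to obtain a Hopf Galois structure of type $(G,\star)$. The extra remarks on regularity of $\lambda_\star(G)$ and on identifying the two copies of $G$ are sensible bookkeeping that the paper leaves implicit.
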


\begin{proof}  Given the skew brace structure $(G, \circ, \star)$ on the Galois group $(G, \circ)$ of $L/K$, we have by Proposition \ref{3.1} that $\lambda_\circ(G)$ is contained in $\mathrm{Hol}(G, \star)$, and so the subgroup $N = \lambda_\star(G) \subset \mathrm{Perm}(G)$ is normalized by $\lambda_\circ (G)$.  Thus $N$ corresponds by Galois descent to a Hopf Galois structure on $L/K$ of type $(G, \star)$.   \end{proof}

But the correspondence between regular subgroups $N$ of $\mathrm{Perm}(\Gamma)$ isomorphic to $(G, \star)$ and isomorphism types of skew braces $(G, \circ, \star)$ with $(G, \circ) \cong \Gamma$ and $(G, \star) \cong N$ is not bijective.  We have (c. f. [Ze18], Corollary 2.4):

\begin{proposition} [Byott, Zenouz] \label{BZ} Given an isomorphism type $(B, \circ, \star)$ of skew left brace, the number of Hopf Galois structures on a Galois extension $L/K$ with Galois group isomorphic to $(B, \circ)$ and skew brace isomorphic to $(B, \circ, \star)$ is 
\[ \mathrm{Aut}(B, \circ)/\mathrm{Aut}_{sb}(B, \circ, \star).\]
\end{proposition}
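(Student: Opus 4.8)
The plan is to count the regular subgroups $N$ of $\mathrm{Perm}(B)$ that are isomorphic to $(B,\star)$, normalized by $\lambda_\circ(B)$, and that induce on $(B,\circ)$ precisely the given skew brace isomorphism type $(B,\circ,\star)$. By the dictionary established in Theorem~\ref{111} and Proposition~\ref{3.1}, each such $N$ arises, after transporting via the bijection $n\mapsto n(e)$, from a group operation $\star'$ on $B$ making $(B,\circ,\star')$ a skew brace; and the Hopf Galois structures we want correspond to those $N$ for which $(B,\circ,\star')$ is isomorphic, as a skew brace fixing the $\circ$-operation, to $(B,\circ,\star)$. So first I would fix a reference copy: take $N_0=\lambda_\star(B)\subset\mathrm{Perm}(B)$, which is regular, isomorphic to $(B,\star)$, and normalized by $\lambda_\circ(B)$ by Proposition~\ref{3.1}.

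Next I would set up a group action. The group $\mathrm{Aut}(B,\circ)$ acts on $\mathrm{Perm}(B)$ by conjugation through the left regular representation: for $\psi\in\mathrm{Aut}(B,\circ)$, conjugation by $\psi$ (viewed in $\mathrm{Perm}(B)$) sends $\lambda_\circ(b)$ to $\lambda_\circ(\psi(b))$, hence permutes the set of subgroups normalized by $\lambda_\circ(B)$ and preserves regularity and isomorphism type. The key claim is that this action is \emph{transitive} on the set of regular subgroups giving rise to the skew brace type $(B,\circ,\star)$: if $N_1$ and $N_2$ both yield skew braces isomorphic over $(B,\circ)$ to $(B,\circ,\star)$, then the isomorphism between them is realized by an element of $\mathrm{Aut}(B,\circ)$, and conjugating by that automorphism carries $N_1$ to $N_2$. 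Here one uses that a skew brace isomorphism $(B,\circ,\star_1)\to(B,\circ,\star_2)$ which is the identity on the underlying set $B$ forces the two transported regular subgroups to be literally conjugate by the corresponding automorphism of $(B,\circ)$, because $\lambda_{\star}$ is recovered functorially from the brace data.

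Then the count follows from the orbit–stabilizer theorem: the number of Hopf Galois structures in question equals the size of the orbit of $N_0$, which is $|\mathrm{Aut}(B,\circ)|$ divided by the order of the stabilizer of $N_0$. The stabilizer consists of those $\psi\in\mathrm{Aut}(B,\circ)$ whose conjugation fixes $N_0=\lambda_\star(B)$ setwise; unwinding, such $\psi$ must simultaneously be an automorphism of $(B,\star)$, i.e.\ must lie in $\mathrm{Aut}(B,\circ)\cap\mathrm{Aut}(B,\star)$ after suitable identification, which is exactly $\mathrm{Aut}_{sb}(B,\circ,\star)$, the group of skew brace automorphisms. Dividing gives $|\mathrm{Aut}(B,\circ)|/|\mathrm{Aut}_{sb}(B,\circ,\star)|$, i.e.\ the stated index $\mathrm{Aut}(B,\circ)/\mathrm{Aut}_{sb}(B,\circ,\star)$.

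The main obstacle I expect is the transitivity claim — specifically, verifying cleanly that two regular subgroups inducing \emph{isomorphic} (not equal) skew brace structures are related by conjugation by an element of $\mathrm{Aut}(B,\circ)$ rather than merely by an abstract isomorphism of permutation groups. This requires carefully tracking how an abstract skew brace isomorphism interacts with the normalization condition $\lambda_\circ(B)\subset\mathrm{Hol}(B,\star)$ and with the identification of $N$ via evaluation at $e$; the subtlety is that changing the regular subgroup by conjugation by a permutation that is \emph{not} in $\mathrm{Aut}(B,\circ)$ generally changes the Galois-group side, so one must show the isomorphism can always be adjusted to fix $(B,\circ)$. Once that bookkeeping is in place, the orbit–stabilizer computation and the identification of the stabilizer with $\mathrm{Aut}_{sb}$ are routine. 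Since the result is attributed to Byott and Zenouz with a reference to [Ze18, Corollary 2.4], I would also simply cite that source for the details rather than reprove everything.
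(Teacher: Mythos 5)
The paper does not actually prove this proposition: it is stated with attribution to Byott and Zenouz and a citation to [Ze18, Corollary 2.4], so there is no in-text argument to compare against. Your orbit--stabilizer sketch is the standard proof from that source and is essentially correct: $\mathrm{Aut}(B,\circ)$ acts by conjugation on the set of regular subgroups of $\mathrm{Perm}(B)$ normalized by $\lambda_\circ(B)$, the orbit of $N_0=\lambda_\star(B)$ consists exactly of those subgroups inducing the skew brace isomorphism type $(B,\circ,\star)$, and the stabilizer is $\mathrm{Aut}(B,\circ)\cap\mathrm{Aut}(B,\star)=\mathrm{Aut}_{sb}(B,\circ,\star)$ (using $\psi\lambda_\star(b)\psi^{-1}=\lambda_\star(\psi(b))$, which follows by evaluating at the common identity $e$ of the two groups).

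One clarification on the step you flag as the main obstacle: it is not really an obstacle, and your phrasing of it is slightly off. You speak of a skew brace isomorphism ``which is the identity on the underlying set''; such a map would force $\star_1=\star_2$ outright. The correct and much easier observation is that every regular subgroup $N$ normalized by $\lambda_\circ(B)$ induces a skew brace whose circle group is \emph{literally} $(B,\circ)$ on the underlying set $B$, so any abstract skew brace isomorphism $\phi:(B,\circ,\star_1)\to(B,\circ,\star_2)$ is, by definition, simultaneously an automorphism of $(B,\circ)$; there is nothing to ``adjust.'' Conjugating by $\phi$ then sends $\lambda_{\star_1}(b)$ to $\lambda_{\star_2}(\phi(b))$, hence $N_1$ to $N_2$, which is the transitivity you need. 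With that point made precise, your argument is complete, and citing [Ze18, Corollary 2.4] for the details, as you propose, matches what the paper itself does.
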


Here $\mathrm{Aut}_{sb}(B, \circ, \star)$ is the group of skew brace automorphisms of $(B, \circ, \star)$, that is, maps from $B$ to $B$ that are simultaneously group automorphisms of $(B, \star)$ and of $(B, \circ)$.

\

  As noted earlier, there has been considerable interest among researchers in Hopf Galois theory on the questions, given a $G$-Galois extension of fields, what are the possible types of Hopf Galois structures on the extension, or, given a group $N$, for which $G$ is there a $G$-Galois extension of fields with a Hopf Galois structure of type $N$?  Concisely, which pairs $(G, N)$ are realizable?   There has also been  considerable interest among researchers on  braces and, more recently, skew braces, on the question, which pairs $(G,N)$  of finite groups can be the additive, resp. circle group of a skew brace?   The two question are the same.  As an indication of interest in the brace question,  we note that at least a dozen of the 50 problems on skew braces in Vendramin's recent manuscript, ``Problems on skew left braces" [Ven18] relate to that question.  

\section{Bi-skew braces}\label{s-rsb}

 A bi-skew brace  is a finite set $B$ with two operations in which either operation can define the additive group.  More precisely,

\begin{definition}  A bi-skew brace  is a finite set $B$ with two operations, $\star$ and $\circ$ so that $(B, \star)$ is a group , $(B, \circ)$ is a group, and the two compatibility conditions
\[ a \circ (b \star c) = (a \circ b) \star a^{-1} \star (a \circ c) \]
and
\[ a \star (b \circ c) = (a \star b) \circ \overline{a} \circ  (a \star c) \]
hold for all $a, b, c$ in $B$.   \end{definition}
Thus $B$ with the two  operations is a skew left brace with either group acting as the additive group.                               

\begin{example}
A trivial example:  A group $G$ with operation $\star$ is a bi-skew brace with $\circ = \star$. 

Almost as trivial:  A group $G$ with operation $\star$ is a bi-skew brace with $\circ$ defined by $g \circ h = h * g$.  (See [SV18], Example 1.5.)  
\end{example}

We'll see many more examples below.

Given a skew brace $B$ with circle group isomorphic to $\Gamma$ and additive group isomorphic to $G$ the pair $(\Gamma, G)$ is realizable.  If $B$ is a bi-skew brace, then the pair $(G,\Gamma)$ is also realizable.

More generally we have the following quantitative result, an immediate consequence of Proposition \ref{BZ}.

\begin{proposition}  Let $(B, \circ, \star)$ be a bi-skew brace with $(B,  \circ) \cong \Gamma$ and $(B, \star) \cong G$.  Let $e_B(\Gamma, [G])$ be the set of Hopf Galois structures of type $G$ on a Galois extension $L/K$ with Galois group $\Gamma$ where the Hopf Galois structures come from $B$,and let $e_B(G, [\Gamma])$ be the set of Hopf Galois structures of type $\Gamma$ on a Galois extension $L'/K'$ with Galois group $G$ where the Hopf Galois structures come from $B$.  Then
\[ e_B(\Gamma, [G]) \cdot |\mathrm{Aut}(G)| = e_B(G, [\Gamma]) \cdot |\mathrm{Aut}(\Gamma)|. \]
\end{proposition}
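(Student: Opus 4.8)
The plan is to apply Proposition \ref{BZ} twice, once with each of the two group operations playing the role of the Galois group, and then compare. The key observation is that a bi-skew brace $(B, \circ, \star)$ is simultaneously a skew brace with additive group $(B, \star)$ and circle group $(B, \circ)$, and a skew brace with additive group $(B, \circ)$ and circle group $(B, \star)$. So Proposition \ref{BZ} applies in both directions.

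First I would set up the two counts. Viewing $(B, \circ, \star)$ as a skew brace with circle group $(B, \circ) \cong \Gamma$, Proposition \ref{BZ} says the number of Hopf Galois structures of type $G$ on a $\Gamma$-Galois extension coming from this skew brace is $|\mathrm{Aut}(B, \circ)|/|\mathrm{Aut}_{sb}(B, \circ, \star)|$; that is, $e_B(\Gamma, [G]) = |\mathrm{Aut}(\Gamma)|/|\mathrm{Aut}_{sb}(B,\circ,\star)|$. Next, viewing the \emph{same} set $B$ with the two operations interchanged as a skew brace with circle group $(B, \star) \cong G$, Proposition \ref{BZ} gives $e_B(G, [\Gamma]) = |\mathrm{Aut}(G)|/|\mathrm{Aut}_{sb}(B, \star, \circ)|$. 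Cross-multiplying each equation and comparing, the claimed identity $e_B(\Gamma,[G])\cdot|\mathrm{Aut}(G)| = e_B(G,[\Gamma])\cdot|\mathrm{Aut}(\Gamma)|$ follows provided $|\mathrm{Aut}_{sb}(B, \circ, \star)| = |\mathrm{Aut}_{sb}(B, \star, \circ)|$.

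The one point that needs a remark — and the only place where anything is actually being claimed — is that the skew brace automorphism group is unchanged when we swap the roles of the two operations. But this is immediate from the definition recalled in the excerpt: a skew brace automorphism of $(B, \circ, \star)$ is a bijection $B \to B$ that is simultaneously an automorphism of $(B, \star)$ and of $(B, \circ)$, and this condition is manifestly symmetric in $\star$ and $\circ$. Hence $\mathrm{Aut}_{sb}(B, \circ, \star) = \mathrm{Aut}_{sb}(B, \star, \circ)$ as subgroups of $\mathrm{Perm}(B)$, and in particular they have the same order. There is no real obstacle here; the proof is just an assembly of Proposition \ref{BZ} with the trivial symmetry of the skew-brace automorphism condition, and one could even phrase it as: both $e_B(\Gamma,[G])\cdot|\mathrm{Aut}(G)|$ and $e_B(G,[\Gamma])\cdot|\mathrm{Aut}(\Gamma)|$ equal $|\mathrm{Aut}(\Gamma)|\cdot|\mathrm{Aut}(G)|/|\mathrm{Aut}_{sb}(B,\circ,\star)|$.
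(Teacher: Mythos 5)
Your proof is correct and follows essentially the same route as the paper: apply Proposition \ref{BZ} in both directions and observe that $\mathrm{Aut}_{sb}(B,\circ,\star)=\mathrm{Aut}_{sb}(B,\star,\circ)$ because the defining condition is symmetric in the two operations. Nothing further is needed.
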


\begin{proof}  $B$ be a skew brace with additive group $\cong G$ and circle group $\cong \Gamma$.  Then by Proposition \ref{BZ},  
\[  e_B(\Gamma, [G]) = \frac {|\mathrm{Aut}(\Gamma)|}{|\mathrm{Aut}_{sb}(B)|}.\]
For a  bi-skew brace $B$,  the group of skew brace automorphisms of $(B, \circ, \star)$ is identical to the group of skew brace automorphisms of $(B, \star,\circ)$. The result follows immediately. \end{proof}

\section{Bi-skew braces  from radical algebras}
Here is a  collection of non-trivial examples of bi-skew braces.

\begin{proposition}  Let $(A, +, \cdot)$ be a nilpotent $\mathbb{F}_p$-algebra of $\mathbb{F}_p$-dimension $n$.  Define the circle operation on $A$ by 
\[ a \circ b = a + b + a\cdot b.\]
Then $(A, \circ, +)$ is a bi-skew brace if and only if $A^3 = 0$ (i. e. for every $a, b, c$ in $A$, $a\cdot b \cdot c = 0$).
\end{proposition}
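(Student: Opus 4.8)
The plan is to check the two defining identities of a bi-skew brace directly, taking advantage of the fact that one of them is automatic. Throughout I would use that $(A,\circ)$ is a group: associativity of $\circ$ is the usual radical-ring identity, $0$ is the neutral element, and since $A$ is nilpotent the $\circ$-inverse of $a$ exists and equals the finite sum $\overline a = -a + a^2 - a^3 + \cdots$; in particular $\overline a \in A$, and $a \mapsto \overline a$ is a bijection of the underlying set $A$. The two identities I will invoke repeatedly are $a + \overline a + a\overline a = 0$ and $a + \overline a + \overline a a = 0$, i.e. the two sides of $a \circ \overline a = \overline a \circ a = 0$. Finally, recall that $(A,\circ,+)$ is always a left brace: expanding gives $a \circ (b+c) = a + b + c + ab + ac = (a\circ b) - a + (a\circ c)$, so the first compatibility condition holds with no hypothesis on $A$. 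Hence the statement reduces entirely to the second compatibility condition $a \star (b \circ c) = (a \star b) \circ \overline a \circ (a \star c)$ with $\star = +$.

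Next I would expand that condition. Its left-hand side is $a + (b \circ c) = a + b + c + bc$. For the right-hand side, first
\[ (a + b) \circ \overline a = a + b + \overline a + a\overline a + b\overline a = \bigl(a + \overline a + a\overline a\bigr) + b + b\overline a = b + b\overline a, \]
and then composing with $a + c$ and expanding by distributivity,
\[ (b + b\overline a) \circ (a + c) = a + b + c + ba + bc + b\overline a + b\overline a a + b\overline a c . \]
Subtracting the two sides, the terms $a,b,c,bc$ cancel and what is left is $-\bigl(ba + b\overline a + b\overline a a\bigr) - b\overline a c = -b\bigl(a + \overline a + \overline a a\bigr) - b\overline a c = -b\,\overline a\, c$. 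Therefore the second compatibility condition holds for all $a,b,c\in A$ if and only if $b\,\overline a\, c = 0$ for all $a,b,c\in A$.

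Finally, since $a \mapsto \overline a$ is onto $A$, the condition ``$b\,\overline a\, c = 0$ for all $a,b,c$'' is equivalent to ``$b\,d\,c = 0$ for all $b,c,d\in A$'', i.e. to $A^3 = 0$; this yields both implications at once. (As an alternative to the direct expansion, one could argue via Proposition \ref{3.1} with the roles of $\star$ and $\circ$ interchanged: the requirement is that $\mathcal{L}'_a\colon x \mapsto \overline a \circ (a + x)$ lie in $\mathrm{Aut}(A,\circ)$ for every $a$; using $\overline a \circ a = 0$ one gets $\mathcal{L}'_a(x) = x + \overline a x$, and imposing that this respects $\circ$ leads to the same identity $x\overline a y = 0$.) I do not expect a deep obstacle here; the only points requiring care are the bookkeeping in the expansion together with the fact that $A$ need not be unital, so one must manipulate the element $\overline a$ itself rather than a formal inverse ``$(1+a)^{-1}$'', and the passage from ``$b\overline a c = 0$ for all $a$'' to ``$A^3 = 0$'', which is exactly where surjectivity of $\circ$-inversion is used.
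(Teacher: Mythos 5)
Your proof is correct, and it follows the same basic strategy as the paper: observe that the left-brace identity $a \circ (b+c) = (a\circ b) - a + (a\circ c)$ holds for any nilpotent algebra, so the whole statement reduces to the second compatibility condition, which is then expanded directly. The execution differs in a way worth noting. The paper substitutes the truncated expansion $\overline{a} \equiv -a + a^2 - a^3 \pmod{A^4}$ and computes the right-hand side only modulo $A^4$, finding that the two sides of (*) differ by a term of the form $b\cdot a\cdot c$ there; the ``only if'' direction then rests on the claim that $A^3 \ne 0$ produces some $b\cdot a\cdot c \ne 0$ in $A/A^4$, which implicitly uses $A^3 \ne A^4$ (true by nilpotence, but left unstated). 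Your computation is exact: using $a\circ\overline{a} = \overline{a}\circ a = 0$ you show the two sides differ by precisely $b\,\overline{a}\,c$, and bijectivity of $\circ$-inversion then converts ``$b\,\overline{a}\,c = 0$ for all $a,b,c$'' into ``$A^3=0$'', giving both implications at once with no passage to a quotient. The one place to tighten is the parenthetical alternative via Proposition~\ref{3.1}: requiring $\mathcal{L}'_a(x) = x + \overline{a}x$ to respect $\circ$ actually yields the condition $(x + \overline{a}x)\,\overline{a}\,y = 0$, not immediately $x\,\overline{a}\,y = 0$; you need the additional observation that $x \mapsto x + \overline{a}x = \mathcal{L}'_a(x)$ is itself a bijection of $A$ before concluding $A^3 = 0$. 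Since that route is only offered as an aside, this does not affect the validity of your main argument.
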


\begin{proof} 
As observed in [Ru07],  every nilpotent $\mathbb{F}_p$-algebra yields a left brace $(A, \circ, +)$, because for all $a, b, c$ in $A$,
\[ a \circ (b + c)  = a + b + c + ab + ac = (a \circ b) - a + (a \circ c).\]
To consider the possibility that $(A,  +, \circ,)$ is also a skew brace we  first observe that in $A$, 
\[ a \circ b \circ c = (a + b + c) + (a\cdot b + a \cdot c + b \cdot c) + a \cdot b \cdot c.\]
Now consider the  skew brace condition (*):
\[a + (b \circ c) = (a + b) \circ \overline{a} \circ (a + c) ,\]
where $\overline{a}$ is the $\circ$-inverse of $a$.  If this holds for all $a, b, c$, then it holds modulo the ideal $A^4$.  

The left side of (*) is 
\[ a + b + c + b\cdot c .\]
The right side of (*) is
\[ \begin{aligned} &(a +b) + \overline{a}  + (a +c)\\
& + (a + b)\cdot \overline{a} + (a + b) \cdot (a + c) +  \overline{a} \cdot (a + c)\\
& +  (a + b) \cdot \overline{a} \cdot (a + c).\end{aligned} \]
Modulo $A^4$, $\overline{a} = -a + a^2 - a^3$ (where $a^n = a \cdot a \cdot \ldots \cdot a$ ($n$ factors).  Viewing the right side modulo $A^4$, we obtain
\[ \begin{aligned} &(a +b) + (-a + a^2 - a^3)  + (a +c)\\
& + (a + b)\cdot (-a + a^2) + (a + b) \cdot (a + c) +  (-a + a^2) \cdot (a + c)\\
& +  (a + b) \cdot (-a) \cdot (a + c).\end{aligned} \]
In the algebra $(A, +, \cdot)$ this last expression reduces to 
\[ a + b + c + b \cdot c + b \cdot a \cdot c .\]
Thus, if $A^3 = 0$, then $b \cdot a \cdot c = 0$ and  (*) holds for all $a, b, c$ in $A$.  On the other hand, if $A^3 \ne 0$, then there exist $a, b, c$ so that $b \cdot a  \cdot c \ne 0$ in $A$, hence in $A/A^4$.  So the skew brace condition (*) fails in $A$.
\end{proof} 

To get a sense of how many  examples there are, we note:

\begin{theorem} (Kruse-Price)[KP70, Theorem 2.2]  The number of isomorphism classes of $\mathbb{F}_p$-algebras $A$ of dimension $n$ with $A^3 = 0$ is $p^\alpha$ where $\alpha = \frac 4{27}n^3 + O(n^2)$. \end{theorem}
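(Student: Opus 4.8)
The plan is to turn the classification into a linear-algebra orbit count and then estimate the number of orbits from both sides. If $A$ is an $\mathbb{F}_p$-algebra with $A^3 = 0$, then $W := A^2$ is a two-sided ideal with $W\cdot A = A\cdot W = W^2 = 0$, so the multiplication $A\times A\to A$ takes values in $W$ and kills $W$ in each slot; it therefore factors through a \emph{surjective} linear map $\beta: V\otimes V \to W$, where $V := A/A^2$. Conversely, any surjective linear $\beta: V\otimes V\to W$ makes $V\oplus W$ into such an algebra, associativity being automatic once $A^3=0$. First I would verify that two such algebras of dimension $n$ with $\dim A^2 = m$ are isomorphic precisely when, after a choice of bases, the associated maps $\beta,\beta'$ lie in the same orbit of $GL(V)\times GL(W)\cong GL_{n-m}(\mathbb{F}_p)\times GL_m(\mathbb{F}_p)$ acting by $(\alpha,\delta)\cdot\beta = \delta\circ\beta\circ(\alpha^{-1}\otimes\alpha^{-1})$. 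The only delicate point here is that the orbit is independent of the choice of a complement $V$ to $A^2$: replacing each $v$ by $v+\psi(v)$ with $\psi:V\to A^2$ leaves the product $v_1v_2$ unchanged, so the same $\beta$ results. Thus the number of isomorphism classes equals $\sum_{m=0}^{n-1} N(n,m)$, where $N(n,m)$ is the number of such orbits, and $N(n,m)=0$ unless $m\le (n-m)^2$.

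Next I would estimate $N(n,m)$, writing $k=n-m$. The space of all linear maps $V\otimes V\to W$ has $p^{k^2m}$ elements, and a proportion $1-o(1)$ of them are surjective whenever $k^2$ is substantially larger than $m$ (the non-surjective ones lie in a union of $\approx p^m$ hyperplanes of $W$, a negligible fraction). Since $|GL_k(\mathbb{F}_p)\times GL_m(\mathbb{F}_p)|\le p^{k^2+m^2}$, orbit--stabilizer gives $N(n,m)\ge (1-o(1))\,p^{\,k^2m-k^2-m^2}$ when $m\le (n-m)^2$, while trivially $N(n,m)\le p^{k^2m}$ for all $m$. Hence, for every $m$ with $m\le (n-m)^2$, up to an additive $o(1)$,
\[
k^2 m - n^2 \;\le\; \log_p N(n,m) \;\le\; k^2 m,
\]
using $k^2+m^2\le n^2$.

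Finally I would optimize over $m$. The function $f(m)=(n-m)^2m$ on $[0,n]$ is maximized at $m=n/3$, where $f(n/3)=4n^3/27$, and since $f'=O(n^2)$ near the maximum, taking $m_0$ to be the nearest integer gives $f(m_0)=\tfrac{4}{27}n^3+O(n^2)$ with $m_0\le (n-m_0)^2$ for large $n$. The single term $m=m_0$ already yields a lower bound $p^{\frac{4}{27}n^3+O(n^2)}$ for the total count; and summing the upper bounds over the at most $n$ admissible values of $m$ multiplies the maximal term $p^{\max_m f(m)} = p^{\frac{4}{27}n^3+O(n^2)}$ by only $n = p^{o(n^2)}$. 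Combining the two bounds gives the count as $p^\alpha$ with $\alpha = \tfrac{4}{27}n^3 + O(n^2)$.

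I expect the main obstacle to be the bookkeeping in the middle step: one must be honest that every lower-order distortion — the factors $\prod_i(1-p^{-i})$ hidden in $|GL|$, the $1-o(1)$ surjectivity correction, and the factor $n$ from summing over $m$ — is genuinely absorbed into the claimed $O(n^2)$ error, which holds because each of these is at most $p^{O(n)}$, and the dominant error $-(n-m)^2-m^2\in[-n^2,0]$ together with the rounding of $m$ to an integer is $O(n^2)$. The reduction in the first step is routine apart from the independence-of-complement observation, which is exactly the place where a careless argument would produce the wrong group and hence the wrong exponent (for instance $\tfrac{2}{27}$, the value one would get from an antisymmetric or symmetric structure map).
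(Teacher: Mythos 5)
The paper offers no proof of this statement---it is quoted verbatim as Theorem 2.2 of Kruse--Price [KP70]---so there is no internal argument to compare yours against; I can only assess your proposal on its own terms, and it is correct, following what is essentially the classical Kruse--Price strategy. The reduction to $GL(V)\times GL(W)$-orbits of surjective bilinear maps $V\otimes V\to W$ is sound, and you correctly isolate the two points where a careless version goes wrong: the independence of the orbit from the choice of complement to $A^2$ (the unipotent part $v\mapsto v+\psi(v)$ acts trivially on $\beta$ because all products involving $A^2$ vanish), and the fact that the structure map is a general, not symmetric or alternating, bilinear map, which is what produces $\tfrac{4}{27}$ rather than $\tfrac{2}{27}$. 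The bounds $p^{k^2m-k^2-m^2}\le N(n,m)\le p^{k^2m}$ and the optimization of $(n-m)^2m$ at $m=n/3$ then give the exponent $\tfrac{4}{27}n^3+O(n^2)$, with all correction factors ($\prod_i(1-p^{-i})$, the surjectivity defect, the factor $n$ from the sum over $m$) absorbed uniformly in $p$. One small imprecision: your claim that a proportion $1-o(1)$ of maps are surjective does not hold uniformly over all admissible $m$ (e.g.\ when $m=k^2$ the surjective maps form only a bounded-below constant fraction), but this is harmless since the lower bound is invoked only at $m_0\approx n/3$, where $k^2-m_0\sim\tfrac{4}{9}n^2$ and the fraction of non-surjective maps is at most $p^{m_0-1-k^2}$, which is negligible.
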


In [DeG17], de Graaf determines all isomorphism types of radical algebras $A$ of dimension $\le 4$ over a finite field.  Some of the algebras $A$ are non-commutative and have $A^3 = 0$.  
Here is a dimension 3 example.  

\begin{example}  Let $ A = A_{3, 4}^0$ be the three-dimensional $\mathbb{F}_p$-algebra of de Graaf.  Then 
\[ A_{3, 4}^0 = \langle a, b, c | a^2 = c, ab = c\rangle \]
(so all other products  of two of $a, b, c$ are zero).  Note that $A^3 = 0$.

Let $x = c, y = b, z = a-b$, then  the multiplication table in $A$ on those $\mathbb{F}_p$-basis elements is given by 
\begin{center}\begin{tabular}
{c | c |c |c} \\
$\cdot$ &$x$ & $y$ & $z$ \\ \hline
$x$ & 0 & 0 & 0\\\hline
$y$  & 0 & 0 & 0\\\hline
$z$ & 0 & $x$ & 0 \\ \hline
\end{tabular}\end{center}
Mapping $A$ to $\mathbb{F}_p^3$ by sending $x, y, z$ to  the standard basis vectors in $\mathbb{F}_p^3$
and defining $\overline{u} \circ \overline{v} = \overline{u} + \overline{v} + \overline{u} \cdot \overline{v}$ in $A$ as usual, we find that 
\[\begin{pmatrix} x_1\\y_1\\z_1 \end{pmatrix} \circ \begin{pmatrix} x_2\\y_2\\z_2 \end{pmatrix} = \begin{pmatrix} x_1 + x_2 + z_1y_2\\y_1+ y_2\\z_1 + z_2 \end{pmatrix}.\]
This is the circle operation for the second brace with additive group $(\mathbb{Z}/(p))^3$ and socle of order $p^2$ of Theorem 3.2 of Bachiller's classification of left braces in [Ba14]. The circle group $(A, \circ)$ is isomorphic to the Heisenberg group, the unique non-abelian group of order $p^3$ and exponent $p$.

Since  $A^3 = 0$, this brace is a bi-skew brace:  $(A, +, \circ)$ is a skew brace with  additive group $(A, \circ)$ isomorphic to the Heisenberg group.
\end{example}

\section{Bachiller's classification of left braces of order $p^3$}

A left brace $(B, +, \circ)$ is two-sided if
\[ ((a + b)\circ c)  = (a \circ c) - c + (b \circ c)\]
for all $a, b, c$ in $B$.  If we define a multiplication $x \cdot y$ by 
\[ x \circ y = x + y + x \cdot y, \]
then the formula above is equivalent to right distributivity in $(B, +, \cdot)$.  Rump [Rum07] showed that a brace is two-sided if and only if $(B, +, \cdot)$ is a radical ring.

Bachiller [Ba14] classified the left braces $B = (B, \circ, +)$ of order $p^3$. In this section we show that the left braces classified by Bachiller that are bi-skew braces are exactly the braces that are two-sided, hence that are radical rings.

The additive group $(B, +)$ of a left brace of order $p^3$ must be isomorphic to $\mathbb{Z}/(p^3), \mathbb{Z}/(p) \times \mathbb{Z}/(p^2)$ or $(\mathbb{Z}/(p))^3$.  We divide the examination into three cases.

Case 1.     $(B, +) = (\mathbb{Z}/(p))^3$.  For $p > 3$ there are eight forms of isomorphism types of left braces $(B, +, \cdot)$, some with parameters.   For such a brace, write
\[ (B, +) = \{\mathbf{u} = \begin{pmatrix} x\\y\\z \end{pmatrix} :  x, y, z \in \mathbb{Z}_(p)\}.\]
Then  the circle operation in every case has the form 
\[ \mathbf{u}_1 \circ \mathbf{u}_2 = \mathbf{u}_1 + \mathbf{u}_2 + \begin{pmatrix} f\\g\\h \end{pmatrix} \]
where 
\[ \begin{aligned} f &= \mathbf{u}_1^T M_f \mathbf{u}_2 + q_f(\mathbf{u}_1, \mathbf{u}_2)\\
g &= \mathbf{u}_1^T M_g \mathbf{u}_2 + q_g(\mathbf{u}_1, \mathbf{u}_2)\\
h &= \mathbf{u}_1^T M_h \mathbf{u}_2 + q_h(\mathbf{u}_1, \mathbf{u}_2)\end{aligned} \]
where $M_f, M_g, M_h$ are in $M_3(\mathbb{Z}/(p^3)$ and $q_f, q_g, q_h$ are quadratic functions of the components of $\mathbf{u}_1$.

Case 2.  $(B, +) = \mathbb{Z}/(p) \times \mathbb{Z}/(p^2)$.  For $p > 3$ there are fifteen forms of isomorphism types of left braces $(B, +, \cdot)$, some involving integer parameters.   For such a brace, write
\[ (B, +) = \{\mathbf{u} = \begin{pmatrix} x\\y \end{pmatrix} :  x \in \mathbb{Z}_(p), y \in \mathbb{Z}/(p^2)\}.\]
Then  the circle operation in every case has the form 
\[ \begin{aligned} \mathbf{u}_1 \circ \mathbf{u}_2 &=\begin{pmatrix} x_1\\y_1 \end{pmatrix} \circ \begin{pmatrix} x_2\\y_2 \end{pmatrix} \\&= \begin{pmatrix} x_1\\y_1 \end{pmatrix} + \begin{pmatrix} x_2\\y_2 \end{pmatrix} + \begin{pmatrix} f(\mathbf{u}_1, \mathbf{u}_2)\\pg(\mathbf{u}_1, \mathbf{u}_2)\end{pmatrix}\end{aligned} \]
where both $f(\mathbf{u}_1, \mathbf{u}_2)$ and $g(\mathbf{u}_1, \mathbf{u}_2)$ have the form
\[ (x_1, y_1)M\begin{pmatrix} x_2\\y_2 \end{pmatrix} + q(\mathbf{u}_1, \mathbf{u}_2)\]
where $M$ is in $M_2(\mathbb{Z}/(p))$ and $q(\mathbf{u}_1, \mathbf{u}_2)$ is a quadratic function of the components of $\mathbf{u}_1$.

Case 3.  $(B, +) = \mathbb{Z}/(p^3) = \{\mathbf{u} = (x) | x \in \mathbb{Z}/(p^3)\}$.  Then $B$ is a radical algebra with multiplication $(x_1)\cdot (x_2) = (p^rx_1x_2)$, and 
\[  \mathbf{u_1} \circ \mathbf{u_2} = (x_1)\circ (x_2) = (x_1 + x+2  + p^rx_1x_2) \]
for $r = 0, 1, 2$.  If $r = 1$, then 
\[ (x_1)\circ (x_2) \circ (x_3) = (x_1 + x_2 + x_3 + p^2x_1x_2x_3),\]
so $B^3 \ne 0$.  In the cases $r = 0, 2$,  $B^3 = 0$, so $(B, \circ, +)$ is also a skew brace.

We have
\begin{proposition}  In Cases 1 and 2, a left brace  $B = (B, \circ, +)$ of order $p^3$ as classified in [Ba14] is a radical algebra if and only if the quadratic functions $q_f, q_g$, resp. $q_f, q_g, q_h$ are zero.  \end{proposition}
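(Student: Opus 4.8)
The plan is to reduce everything to Rump's criterion recalled above. Since $B$ is a left brace, it is a radical algebra if and only if it is two-sided, i.e.\ if and only if the multiplication $\cdot$ on $(B,+)$ defined by $\mathbf{u}_1\circ\mathbf{u}_2=\mathbf{u}_1+\mathbf{u}_2+\mathbf{u}_1\cdot\mathbf{u}_2$ is right distributive over $+$. Reading $\cdot$ off the displayed circle operations of Cases~1 and~2, we may write $\mathbf{u}_1\cdot\mathbf{u}_2=\beta(\mathbf{u}_1,\mathbf{u}_2)+\mathbf{q}(\mathbf{u}_1,\mathbf{u}_2)$, where $\beta$ is the vector of bilinear forms $\mathbf{u}_1^{T}M_\bullet\mathbf{u}_2$ and $\mathbf{q}$ is the vector with components $q_f,q_g$ (resp.\ $q_f,q_g,q_h$). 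So the Proposition becomes the assertion that $\cdot$ is right distributive if and only if $\mathbf{q}\equiv 0$.

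The direction in which $\mathbf{q}\equiv 0$ implies $B$ is radical is immediate: then $\mathbf{u}_1\cdot\mathbf{u}_2=\beta(\mathbf{u}_1,\mathbf{u}_2)$ is bilinear, hence right distributive, so $B$ is two-sided and therefore a radical algebra. (In Case~2 one should note that $\beta$ is indeed a bilinear map $B\times B\to B$, the $\mathbb{Z}/(p^{2})$ coordinates entering only through the reduction $\mathbb{Z}/(p^{2})\to\mathbb{Z}/(p)$ and through multiplication by $p$, both group homomorphisms.)

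For the converse, since $\beta$ is bilinear and hence already right distributive, right distributivity of $\cdot$ is equivalent to additivity of $\mathbf{q}$ in its first variable:
\[
\mathbf{q}(\mathbf{u}_1+\mathbf{u}_1',\,\mathbf{u}_2)=\mathbf{q}(\mathbf{u}_1,\mathbf{u}_2)+\mathbf{q}(\mathbf{u}_1',\mathbf{u}_2)\qquad\text{for all }\ \mathbf{u}_1,\mathbf{u}_1',\mathbf{u}_2\in B .
\]
Now each $q_\bullet(\mathbf{u}_1,\mathbf{u}_2)$ is homogeneous of degree $2$ in the coordinates of $\mathbf{u}_1$: there is no constant term, since $\mathbf{0}\circ\mathbf{u}_2=\mathbf{u}_2$ forces $\mathbf{0}\cdot\mathbf{u}_2=0$; and any term linear in $\mathbf{u}_1$ is bilinear (the left brace law makes $\cdot$, hence $\mathbf{q}$, additive in $\mathbf{u}_2$) and so has been absorbed into the $M_\bullet$ in Bachiller's normalized forms. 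Hence $\mathbf{q}(2\mathbf{u}_1,\mathbf{u}_2)=4\,\mathbf{q}(\mathbf{u}_1,\mathbf{u}_2)$, while additivity in the first variable gives $\mathbf{q}(2\mathbf{u}_1,\mathbf{u}_2)=2\,\mathbf{q}(\mathbf{u}_1,\mathbf{u}_2)$; so $2\,\mathbf{q}(\mathbf{u}_1,\mathbf{u}_2)=0$ identically, and since $p>3$ (so $2$ is a unit mod $p$) we conclude $\mathbf{q}\equiv 0$, that is $q_f=q_g=0$ (resp.\ $q_f=q_g=q_h=0$).

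The step that genuinely uses [Ba14] is the last normalization claim: that in the tabulated circle operations for Cases~1 and~2 each $q_\bullet$ is really homogeneous quadratic in $\mathbf{u}_1$ with no leftover linear part, together with the mod-$p$ (resp.\ mod-$p^{2}$) bookkeeping in Case~2 needed for the scaling $\mathbf{u}_1\mapsto 2\mathbf{u}_1$ to produce exactly the factor $4$. Once that is granted the argument above is the entire proof; I would simply run through the eight forms of Case~1 and the fifteen forms of Case~2 to confirm it, and I do not expect any of them to present a genuine difficulty — this verification is the only obstacle, and it is more a matter of bookkeeping than of mathematical content.
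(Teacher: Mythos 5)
Your proposal is correct, and its skeleton coincides with the paper's: both reduce, via Rump's theorem that a left brace is a radical ring if and only if it is two-sided, to checking right distributivity of the multiplication $\mathbf{u}_1\cdot\mathbf{u}_2$ read off from the circle operation. Where you differ is in how the verification is carried out. The paper simply asserts that ``one sees routinely'' in each of the $8+15$ tabulated forms that right distributivity holds exactly when the quadratic terms vanish -- implicitly a case-by-case computation. You replace that with a single uniform argument: the bilinear part is automatically right distributive, so right distributivity is equivalent to additivity of $\mathbf{q}$ in its first variable, and comparing $\mathbf{q}(2\mathbf{u}_1,\mathbf{u}_2)=4\,\mathbf{q}(\mathbf{u}_1,\mathbf{u}_2)$ with $2\,\mathbf{q}(\mathbf{u}_1,\mathbf{u}_2)$ forces $\mathbf{q}\equiv 0$ since $p>3$. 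What this buys is that the only thing left to check against Bachiller's tables is the normalization (that each $q_\bullet$ is homogeneous of degree $2$ in $\mathbf{u}_1$ once any bilinear pieces -- e.g.\ the linear parts of binomial-coefficient terms like $\binom{x_1}{2}$ -- are absorbed into $M_\bullet$), which is lighter bookkeeping than verifying distributivity form by form; you correctly flag this as the one remaining table-dependent step. The small points you handle along the way (no constant term because $0$ is the common identity of $+$ and $\circ$; the $\mathbb{Z}/(p^2)$ coordinate in Case~2 entering only through reduction mod $p$ and multiplication by $p$) are exactly the ones that need care, so I see no gap.
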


\begin{proof} To see if a given left brace in Bachiller's classification is a radical ring, it suffices to check right distributivity, where the multiplication $\cdot$ in $(B, \circ, +)$ is defined by 
\[ \mathbf{u}_1 \circ \mathbf{u}_2 = \mathbf{u}_1 + \mathbf{u}_2 + \mathbf{u}_1 \cdot \mathbf{u}_2.\]
One sees routinely that for each case, right distributivity holds if and only if the functions that are quadratic in the components of $\mathbf{u}_1$ are zero.
\end{proof}

One can  see easily that each  radical ring $(B, +, \cdot)$ in Cases 1 and 2 has $(B, \cdot)^3 = 0$.  Hence,

\begin{corollary}  With one exception, every left brace $(B, \circ, +)$ of order $p^3$ where $(B, +,\cdot)$ is a radical ring yields a bi-skew brace. \end{corollary}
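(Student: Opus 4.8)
The plan is to assemble three facts that are already in place. The first is the criterion of the proposition in Section~4: for a radical ring $A$, the brace $(A,\circ,+)$ with $a\circ b=a+b+ab$ is a bi-skew brace if and only if $A^{3}=0$. Although that proposition is stated for nilpotent $\mathbb{F}_{p}$-algebras, I would first observe that its proof works modulo $A^{4}$ and uses only the identity $\overline a\equiv-a+a^{2}-a^{3}\pmod{A^{4}}$, the expansion of $a\circ b\circ c$, and the fact that a finite nilpotent ring with $A^{3}\neq0$ has $A^{3}\neq A^{4}$; none of this depends on the ground ring, so the same computation yields ``bi-skew $\iff A^{3}=0$'' for \emph{every} finite (hence nilpotent) radical ring $A$. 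This extra generality is exactly what is needed, since in Cases~2 and~3 of Bachiller's classification the additive group is not elementary abelian and the radical rings there are not $\mathbb{F}_{p}$-algebras.

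Granting this, Cases~1 and~2 contribute nothing to the list of exceptions: if a left brace $B$ of order $p^{3}$ with $(B,+)\cong(\mathbb{Z}/p)^{3}$ or $\mathbb{Z}/p\times\mathbb{Z}/p^{2}$ is a radical ring, then by the proposition just proved its quadratic terms vanish, hence $B^{3}=0$ by the remark recorded above, and so $B$ is a bi-skew brace by the criterion. In Case~3 the braces that are radical rings are those with $(B,+)\cong\mathbb{Z}/(p^{3})$ and $(x_{1})\circ(x_{2})=(x_{1}+x_{2}+p^{r}x_{1}x_{2})$; by the Case~3 computation each of these has $B^{3}=0$, and is therefore a bi-skew brace, with the single exception of the one for which $(x_{1})\circ(x_{2})\circ(x_{3})=(x_{1}+x_{2}+x_{3}+p^{2}x_{1}x_{2}x_{3})$, where $B^{3}\neq0$ and the criterion shows $B$ is not a bi-skew brace. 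Combining the three cases gives the corollary, with this last brace the sole exception.

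Since every ingredient is already available, I do not expect a real obstruction; the two points needing care are verifying that the argument of the Section~4 proposition is insensitive to the ground ring — so that it covers the non-$\mathbb{F}_{p}$ radical rings arising in Cases~2 and~3 — and carrying out the routine but lengthy case-by-case check, over Bachiller's eight forms in Case~1 and fifteen forms in Case~2, that each one that is a radical ring has $B^{3}=0$. A convenient way to organize the latter is to note that killing the quadratic terms makes the multiplication take values in a subgroup of order at most $p$, and that a finite nilpotent ring with $|B^{2}|\le p$ automatically satisfies $B^{3}=0$ (if $B^{3}=B^{2}$ then $B^{2}=B\cdot B^{2}=B^{2}\cdot B^{2}=\cdots=0$). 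That verification is where essentially all the labour of the proof lies.
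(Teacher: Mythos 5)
Your proposal follows the paper's route exactly: the paper's own justification for this corollary is precisely the chain you describe --- the Section~4 criterion (bi-skew $\iff A^3=0$), the proposition that a Case~1 or Case~2 brace is a radical ring iff the quadratic terms vanish, the bare assertion that ``one can see easily that each radical ring in Cases 1 and 2 has $(B,\cdot)^3=0$,'' and the Case~3 computation isolating the $r=1$ brace on $\mathbb{Z}/(p^3)$. Your one genuine addition --- observing that the Section~4 proposition must be extended from nilpotent $\mathbb{F}_p$-algebras to arbitrary finite nilpotent rings before it can be applied in Cases~2 and~3 --- is correct and is in fact needed; the paper is silent on this point, and you are right that the proof there uses only nilpotence (via $\overline a\equiv -a+a^2-a^3 \pmod{A^4}$ and $A^3\ne A^4$ when $A^3\ne 0$).

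The difficulty is the step you defer as ``routine but lengthy,'' which is exactly where the content lies, and neither your organizing shortcut nor the assertion itself survives scrutiny. The claim that killing the quadratic terms forces $|B^2|\le p$ is false: take $A=\alpha\mathbb{F}_p[\alpha]/(\alpha^4)$, the commutative (hence two-sided, hence radical-ring) brace of order $p^3$ with $\mathbb{F}_p$-basis $\alpha,\alpha^2,\alpha^3$ and $a\circ b=a+b+ab$. Its circle operation is purely bilinear in coordinates, its additive group is $C_p^3$, yet $|A^2|=p^2$ and $A^3=\langle\alpha^3\rangle\ne 0$. By the very criterion you are invoking it is \emph{not} a bi-skew brace (directly: with $a=b=c=\alpha$ one finds $a+(b\circ c)=3\alpha+\alpha^2$ while $(a+b)\circ\overline a\circ(a+c)=3\alpha+\alpha^2-\alpha^3$). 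Since this is a left brace of order $p^3$ whose associated ring is a radical ring, it --- or the Case~1 form of Bachiller's list isomorphic to it --- is a second exception with additive group $C_p^3$, distinct from the stated exception on $\mathbb{Z}/(p^3)$. So the case-by-case verification you postpone would not in fact close: the statement being verified fails for at least one form, and the corollary as stated (``with one exception'') cannot be established by this argument without either excluding such chain-ring braces or amending the exception count.
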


The exception is the brace with additive group $\mathbb{Z}/(p^3)$ and circle operation $x \circ y = pxy$.  

Of the 26 isomorphism forms of left braces of order $p^3$ found by Bachiller,  two forms have $(B, +) \cong \mathbb{Z}_(p^3)$ and are bi-skew braces with $(B, \circ)\cong \mathbb{Z}_(p^3)$, seven forms have $(B, +) \cong \mathbb{Z}_{(p)} \times \mathbb{Z}_{(p^2)}$, $(B, \circ) \cong M_3(p)$ and are bi-skew braces, and four forms have $(B, +) \cong \mathbb{Z}_{p}^3 $, $(B, \circ) \cong M_{(p)}$ and are bi-skew braces.  Here $M_3(p)$ is the unique non-abelian group of order $p^3$ and exponent $p^2$, while $M_{(p)}$ is the Heisenberg group, of order $p^3$ and exponent $p$.

\begin{remark}  For each of the four bi-skew braces $A = (A, \circ, +)$ arising from a non-commutative radical algebra $A$ with $A^3 = 0$,  $(A, +) = C_p^3$ and $(A, \circ) = M_{(p)}$, the Heisenberg group of order $p^3$, Proposition \ref{BZ} yields the quantitative formula
\[e_A(C_p^3, [M_{(p)}])\cdot \mathrm{Aut}(M_{(p)})= e_A (M_{(p)}, [C_p^3])\cdot \mathrm{Aut}(C_p^3) .\]
Now $|Aut(C_p^3| = (p^3 -1)(p^2-1)(p-1)$ and $|Aut(M_{(p)})| = |C_p^3||GL_2(\mathbb{F}_p)| = p^3\cdot (p^2 -1)(p-1)$,  so
\[ e_A (M_{(p)}, [C_p^3]) =(p^3-1)e_A (M_{(p)}, [C_p^3]).\]
In words, given Galois extensions $K/k$ with Galois group $\Gamma \cong C_p^3$ and $K'/k'$ with Galois group $G \cong M_{(p)}$, for each Hopf Galois structure of type $C_p^3$ arising from the bi-skew brace $A$ on $K'/k'$, there are $p^3-1$ Hopf Galois structures of type $M_{(p)}$ arising from $A$ on $K/k$.   

The  numbers $e_B (C_p^3, [M_{(p)}])$ for skew braces $B$ with additive group $M_{(p)}$ and circle group $C_p^3$ may be found in Section 4 of [Ze18].
\end{remark}

\section{On Bachiller's exponent result}

Let $p$ be prime and $B = (B,\circ, +)$ be a brace of order $p^n$.  Then $(B, +)$ is an abelian $p$-group, hence of the form 
\[ (B, +) = \mathbb{Z}_{p^{a_1}} \times \mathbb{Z}_{p^{a_2}}\times \ldots \times \mathbb{Z}_{p^{a_m}} \]
with $a_1 \le a_2 \le \ldots \le a_m$.   Generalizing  [FCC12] for $(B, +)$ a radical ring, Bachiller [Bac16] proved that if $m + 2 \le p$, then for every $b$ in $B$, the order of $b$ in $(B, \circ)$ is equal to the order of $b$ in $(B, +)$.   In particular, if $\circ$ is commutative, then $(B, \circ) \cong (B, +)$.  This result is illustrated with the examples in the last section. 

The limitation on $m$ and $p$ is necessary:  see [Ch07], which constructs examples equivalent to left braces $(B, \circ, \star)$ where $(B, \star) \cong \mathbb{Z}_p^m$ and $(B, \circ)$ is the group of principal units of $\mathbb{F}_p[x]/(x^{m+1})$ and has exponent $p^e$ where $p^{e-1} \le m < p^e$.  

Given  an $\mathbb{F}_p$-algebra $A$ with  $(A, +)$ of order $p^n$, $(A, +)$ has exponent $p$, so if $n+ 2 \le p$, then  $(A, \circ)$ has exponent $p$.  Thus for large $p$, the possible isomorphism types of Galois groups $G$ of order $p^n$ that have Hopf Galois structures arising from identifying $G$ as the circle group of an $\mathbb{F}_p$-algebra are among those groups of order $p^n$ and exponent $p$.   There is some fairly recent literature counting the  groups of order $p^n$ for $p > n+ 2$ and  $n \le 7$.  This literature is reviewed in [VL14], a paper in which Vaughan-Lee gives an explicit polynomial of degree 4 in $p$, 
\[ p^4 + 2p^3 + 147p^2 + (3p + 29)gcd(p-1, 3) + 5gcd(p-1, 4) + 1246,\]
 that counts the number of groups of order $p^8$ and exponent $p$.   How many of these groups can be the circle group of an $\mathbb{F}_p$-algebra of dimension 8 is unknown (to me).  On the other hand, Bachiller [Bac16] has found a group of order $p^{10}$ and exponent $p$ for $p > 12$ that is not the circle group of a brace with additive group an elementary abelian group of order $p^{10}$.

\section{Bi-skew braces and semidirect products}
Now we consider another large class of bi-skew braces.

Let $G$ be a finite group with a pair of complementary subgroups $G_L$, $G_R$.  This means:   $G_L \cap G_R = {1}$ and $|G_L||G_R| = |G|$, or equivalently, each $g$ in $G$ is uniquely a product $g = g_L\cdot g_R$ with $g_L$ in $G_L$, $g_R$ in $G_R$.  Byott [By15] observed that for any such group $G = G_L\cdot G_R$, there is a Hopf Galois structure of type $G$ on a Galois extension with Galois group $G_L \times G_R$.  For let $\beta_L, \beta_R$ be the pair of homomorphisms from $G_L \times G_R$ to $G$ given by 
\[ \beta_L(g_L, g_R) = g_L, \  \  \beta_R(g_L, g_R) = g_R^{-1}.\]
Then $\beta:  G_L \times G_R \to \mathrm{Hol}(G)$ by 
\[ \beta(g_L, g_R) = \lambda(g_L)\rho(g_R) = \lambda(g_Lg_R^{-1})C(g_R)\]
is a regular embedding from $G_L \times G_R$ to $\lambda(G) \rtimes \mathrm{InAut}(G) \subset \mathrm{Hol}(G)$.  (Here $C(g)(x) = gxg^{-1}$ is conjugation by $g$.)  
The corresponding $\circ$-structure on $G(\cdot)$ is induced by $b:   G_L \times G_R$ given by $b(g_L, g_R) = \beta((g_L, g_R))(1) = g_Lg_R^{-1}$.
Thus
\[ \begin{aligned}  b(g_L, g_R)\circ b(h_L, h_R) 
&=  (g_Lg_R^{-1})\circ (h_Lh_R^{-1})\\
&= g_Lh_Lh_R^{-1}g_R^{-1}\\
& =(g_Lh_L)(g_Rh_R)^{-1}\\
& =b((g_Lh_L, g_Rh_R)\\
& =b((g_L, g_R)(h_L, h_R)).\end{aligned} \]
Then $(G, \circ, \cdot)$ is a left skew brace, where $(G, \cdot) = G = G_LG_R$ with its given operation, and $(G, \circ) \cong G_L \times G_R$.

It is easy to check that the skew brace relation
\[ x \circ (y \cdot z) = (x \circ y) \cdot x^{-1} \cdot (x \circ z) \]
holds:  both sides are equal to $xy_Lzy_R^{-1}$.  

Thus Byott's observation implies the existence of skew braces with additive group any group $G$ with complementary subgroups $G_L$ and $G_R$:  the circle group is isomorphic to $G_L \times G_R$.

Now consider the special case where  $G = G_L \rtimes G_R$ is a semidirect product of two finite groups $G_L$ and $G_R$, where $G_L$ is normal in $G$.   

Denote the group operation in $G$ by $\cdot$, which we will often omit.  Thus for $x, y$ in $G$, $xy = x \cdot y$.   

In  the semidirect product, an element $x_R$ of $G_R$ acts on $y_L$ in $G_L$ by conjugation:
\[ x_R^{-1} y_L = (x_R^{-1}y_Lx_R)x_R^{-1}.\] 
 Then 
\[ \begin{aligned} x y &= x_L x_R^{-1} y_L y_R^{-1} \\
& = x_L (x_R^{-1} y_Lx_R)x_R^{-1} y_R^{-1} .\end{aligned} \]
 
As above, we also define  the direct product operation $\circ$ on $G$ by 
\[ \begin{aligned}  x \circ y &=  x_L x_R^{-1} \circ y_Ly_R^{-1}\\
&= x_L y_L y_R^{-1}x_R^{-1}\\
&=  x_L y x_R^{-1}.\end{aligned} \]

For semidirect products, we have

\begin{proposition}\label{9.1}  $(G, \circ, \cdot)$ is a bi-skew brace. \end{proposition}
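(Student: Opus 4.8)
The discussion preceding the statement already shows that $(G,\circ,\cdot)$ is a skew brace with additive group $(G,\cdot)$; that is, the first of the two bi-skew brace identities,
\[ x\circ(y\cdot z)=(x\circ y)\cdot x^{-1}\cdot(x\circ z), \]
holds (both sides equal $x\,y_L\,z\,y_R^{-1}$). So it remains only to check the second,
\[ x\cdot(y\circ z)=(x\cdot y)\circ\overline{x}\circ(x\cdot z), \]
where $\overline{x}$ is the $\circ$-inverse of $x$. The plan is to verify this by a direct calculation in the complementary-subgroup coordinates already in play: write each $g$ in $G$ uniquely as $g=g_Lg_R^{-1}$ with $g_L\in G_L$, $g_R\in G_R$.

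In these coordinates $g\circ h=g_Lh_Lh_R^{-1}g_R^{-1}$, and, reading off the two components, $\overline{g}=g_L^{-1}g_R$. The one extra ingredient I would record first is the description of the original operation $\cdot$ in the same coordinates: because $G_L$ is normal in $G$, $g\cdot h=g_L\,(g_R^{-1}h_Lg_R)\,(h_Rg_R)^{-1}$, so that $g\cdot h$ has $G_L$-component $g_L\cdot C(g_R^{-1})(h_L)$ and $G_R$-component $h_Rg_R$. Feeding this into both sides of the displayed identity, the left side has $G_L$-component $x_L\cdot C(x_R^{-1})(y_Lz_L)$ and $G_R$-component $y_Rz_Rx_R$; on the right side the three successive $\circ$-products combine, with the two factors contributed by $\overline{x}$ cancelling, to give exactly the same two components, the only property used being that $C(x_R^{-1})$ is a homomorphism of $G_L$, so $C(x_R^{-1})(y_L)\,C(x_R^{-1})(z_L)=C(x_R^{-1})(y_Lz_L)$. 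Hence the two sides agree.

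I expect no genuine obstacle; the only thing that needs care is bookkeeping — keeping the convention $g=g_Lg_R^{-1}$ straight and transporting the semidirect-product multiplication into it correctly. If one prefers to avoid writing out the expansion, an alternative is to invoke Proposition \ref{3.1}: the computation of $\cdot$ above shows that the map $\lambda_{\cdot}(x)\colon g\mapsto x\cdot g$ equals $\lambda_{\circ}(x)\,\psi_x$, where $\psi_x$ is the automorphism of $(G,\circ)\cong G_L\times G_R$ given by conjugation by $x_R^{-1}$ on each factor — an automorphism of $G_L$ since $G_L$ is normal in $G$, and an inner automorphism of $G_R$ since $x_R\in G_R$. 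Thus $\lambda_{\cdot}(G)\subseteq\lambda_{\circ}(G)\,\mathrm{Aut}(G,\circ)=\mathrm{Hol}(G,\circ)$, and Proposition \ref{3.1} gives that $(G,\cdot,\circ)$ is a skew brace with additive group $(G,\circ)$, completing the verification that $(G,\circ,\cdot)$ is a bi-skew brace.
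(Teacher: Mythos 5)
Your main argument is correct and is essentially the paper's own proof: you take the first compatibility identity as already established by the preceding discussion and verify the second one, $x\cdot(y\circ z)=(x\cdot y)\circ\overline{x}\circ(x\cdot z)$, by direct computation in the coordinates $g=g_Lg_R^{-1}$, using exactly the facts the paper records, namely $(xy)_L=x_L(x_R^{-1}y_Lx_R)$, $(xy)_R=y_Rx_R$ and $\overline{x}=x_L^{-1}x_R$; working componentwise in $G_L\times G_R$ rather than multiplying everything out in $G$ is only a bookkeeping difference. Your alternative via Proposition \ref{3.1} is also sound and is a genuinely different (and arguably cleaner) route the paper does not take: the factorization $\lambda_{\cdot}(x)=\lambda_{\circ}(x)\psi_x$, with $\psi_x$ acting as conjugation by $x_R^{-1}$ on each factor of $G_L\times G_R$, exhibits $\lambda_{\cdot}(G)\subseteq\mathrm{Hol}(G,\circ)$ directly and makes transparent exactly where normality of $G_L$ and membership $x_R\in G_R$ are used, at the cost of invoking the holomorph criterion rather than checking the defining identity.
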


\begin{proof}  We need to check the relation

\[ x \cdot (y \circ z) = (x \cdot y) \circ \overline{x} \circ (x \cdot z)\ \ \ \ \qquad(*)\]
where if $x = x_Lx_R^{-1}$, then $\overline{x} = x_L^{-1}x_R$ is the $\circ$-inverse of $x$.  

Note that 
\[ \begin{aligned}  xy &= x_L x_R^{-1}y_L y_R^{-1}\\
& = x_L(x_R^{-1}y_L x_R)x_R^{-1}  y_R^{-1} ,\end{aligned} \]
so $(xy)_L = x_L(x_R^{-1}y_L x_R)$ and $(xy)_R^{-1} = x_R^{-1}  y_R^{-1} $.

Now the  left side of (*) is 
\[  x \cdot (y \circ z) = x y_Lzy_R^{-1}. \]
The right side of (*) is
\[ \begin{aligned} ( x y ) \circ \overline{x} \circ(  x z) &=  x  y \circ x_L^{-1}x_R \circ xz\\
&= xy \circ x_L^{-1} xz x_R \\
&= xy \circ x_R^{-1}z x_R\\
& = (xy)_L  x_R^{-1}z x_R (xy)_R^{-1}\\
&= ( x_L(x_R^{-1}y_L x_R))x_R^{-1}z x_R (x_R^{-1}  y_R^{-1} )\\
& =  x y_L z y_R^{-1} .\end{aligned} \]
Hence formula (*) holds and $(G, \circ, \cdot) $ is also a skew brace, with additive group $(G, \circ)$.

Thus given a semidirect product $G = G_L \rtimes G_R$, if we let $\cdot$ be the usual multiplication in $G$, and let $\circ$ be the multiplication on $G$ is given by $x_L x_R^{-1} \circ y_L y_R^{-1} = x_Ly_Ly_R^{-1}x_R^{-1} = x_Ly_L (x_Ry_R)^{-1}$, then $(G, \cdot, \circ)$ is a bi-skew brace. \end{proof}

This immediately yields a result of Crespo, Rio and Vela [CRV16]:

\begin{corollary}\label{CRV}  Let $G = H \rtimes J$ be a semidirect product and let $\Gamma = H \times J$.  Then $(H \rtimes J, H \times J)$ is realizable:   every $G$-Galois extension of fields has a Hopf Galois structure of type $\Gamma$. \end{corollary}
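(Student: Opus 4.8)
The plan is to deduce Corollary \ref{CRV} directly from Proposition \ref{9.1} together with the skew brace / Hopf Galois dictionary established in Section 2. Since Proposition \ref{9.1} produces, from the semidirect product $G = H \rtimes J$, a bi-skew brace $(G, \circ, \cdot)$ in which $(G, \cdot) = G = H \rtimes J$ (the given operation) and $(G, \circ) \cong H \times J$, we are already most of the way there: a bi-skew brace is in particular a skew brace in which \emph{either} operation may serve as the additive group.

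The key step is to read off the right pair of (circle, additive) groups for the application we want. Write the bi-skew brace as $B = B(\circ, \cdot)$ with circle group $(G, \circ) \cong H \times J$ and additive group $(G, \cdot) = H \rtimes J$; because $B$ is a bi-skew brace it is equally the skew brace $B(\cdot, \circ)$ with circle group $(G, \cdot) = H\rtimes J$ and additive group $(G, \circ) \cong H\times J$. Now I would invoke the converse direction of the skew brace / Hopf Galois correspondence (the Proposition in Section 2 stating that a skew brace $(G, \circ_0, \star_0)$ gives, on any Galois extension with Galois group $(G, \circ_0)$, a Hopf Galois structure of type $(G, \star_0)$), applied to the skew brace $B(\cdot, \circ)$: on any Galois extension $L/K$ with Galois group isomorphic to $(G, \cdot) = H\rtimes J$ there is a Hopf Galois structure of type $(G, \circ) \cong H\times J = \Gamma$. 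This is exactly the assertion that $(H\rtimes J, H\times J)$ is realizable, and that every $G$-Galois extension has a Hopf Galois structure of type $\Gamma$.

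A couple of small checks are worth spelling out so the reader can follow without re-deriving anything. First, that a semidirect product really does have a complementary pair of subgroups in the sense used in the section: take $G_L = H$ and $G_R = J$, which intersect trivially and satisfy $|H||J| = |G|$, so the construction of Section 7 applies verbatim. Second, that the two compatibility identities of a bi-skew brace are genuinely symmetric in the roles of $\circ$ and $\cdot$ — which is immediate from the definition — so that ``$B$ is a bi-skew brace'' legitimately licenses applying the Section 2 converse with $\cdot$ as the circle operation rather than $\circ$. No serious obstacle is expected here; the entire content has been front-loaded into Proposition \ref{9.1}, and the corollary is a two-line unwinding of definitions plus one citation to the Section 2 correspondence.

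\begin{proof}  Let $G = H \rtimes J$ with $H$ normal, and set $G_L = H$, $G_R = J$, a pair of complementary subgroups of $G$. By Proposition \ref{9.1} the operations $\cdot$ (the given multiplication on $G$) and $\circ$ (defined by $x_Lx_R^{-1} \circ y_Ly_R^{-1} = x_Ly_L(x_Ry_R)^{-1}$) make $(G, \circ, \cdot)$ a bi-skew brace, with $(G, \circ) \cong H \times J = \Gamma$ and $(G, \cdot) = G$. Being a bi-skew brace, $(G, \cdot, \circ)$ is in particular a skew brace with circle group $(G, \cdot) = G$ and additive group $(G, \circ) \cong \Gamma$. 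Applying the converse correspondence of Section 2 to this skew brace, every Galois extension $L/K$ with Galois group isomorphic to $(G, \cdot) = H \rtimes J$ admits a Hopf Galois structure of type $(G, \circ) \cong H \times J = \Gamma$. Hence $(H \rtimes J, H \times J)$ is realizable.  \end{proof}
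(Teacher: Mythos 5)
Your proof is correct and is exactly the unwinding the paper intends when it says Proposition \ref{9.1} ``immediately yields'' the corollary: the bi-skew brace property lets you view $(G,\cdot,\circ)$ as a skew brace with circle group $H\rtimes J$ and additive group $H\times J$, and the Section 2 correspondence does the rest. No issues.
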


[CRV16] obtained this result by their method of induced Hopf Galois structures.

We note some  examples related to previous work on Hopf Galois structures.

\begin{example}  
 In [AB18], Alabdali and Byott look at Hopf Galois structures on a $\Gamma$- Galois 
extension $K/k$ of squarefree degree $n$.  They show that if $\Gamma = C_n$  is cyclic, then for every group $G$ of order $n$, $(C_n, G)$ is realizable, and in fact they count the number of Hopf Galois structures of type $G$ on $K/k$.    

They observe that every group $G$ of squarefree order $n$ is metabelian, that is, there is an abelian normal subgroup $A$ of $G$ so that $G/A$ is abelian.  Since $n$ is squarefree, necessarily $r = |A|$ and $s = |G/A|$ are coprime.  By the Schur-Zassenhaus Theorem, it follows that $G$ is a semidirect product, $G = A \rtimes G/A$.  Thus, by the discussion above on groups with complementary subgroups,  for every group $G$ of squarefree order $n$, $(C_n, G)$ is realizable.  

It follows from Corollary \ref{CRV} that $(G, C_n)$ is also realizable, that is,

 \begin{corollary}  Every Galois extension $K/k$ of squarefree order $n$ has a Hopf Galois structure of cyclic type.\end{corollary}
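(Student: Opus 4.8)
The plan is to read the statement off from Corollary \ref{CRV} once the Galois group has been put in semidirect-product form, which the discussion preceding the statement has essentially already done. First I would let $G$ be the Galois group of $K/k$, a group of squarefree order $n$. As recalled above, $G$ is metabelian: there is an abelian normal subgroup $A$ of $G$ with $G/A$ abelian. Put $r = |A|$ and $s = |G/A|$, so $rs = n$. Since $n$ is squarefree, $\gcd(r,s) = 1$, so by the Schur--Zassenhaus theorem $G$ is a semidirect product $G = A \rtimes (G/A)$.

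The only extra ingredient is the identification $A \times (G/A) \cong C_n$. Both $A$ and $G/A$ are finite abelian groups of squarefree order, and a finite abelian group of squarefree order is cyclic: its primary decomposition contributes exactly one cyclic factor of prime order for each prime dividing the order, and the Chinese Remainder Theorem assembles these into a single cyclic group. Hence $A \cong C_r$ and $G/A \cong C_s$, and since $\gcd(r,s)=1$ we get $A \times (G/A) \cong C_r \times C_s \cong C_{rs} = C_n$.

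Now I would apply Corollary \ref{CRV} with $H = A$ and $J = G/A$: since $G = A \rtimes (G/A)$ and $\Gamma := A \times (G/A) \cong C_n$, every $G$-Galois extension of fields, in particular $K/k$, admits a Hopf Galois structure of type $\Gamma \cong C_n$, i.e. of cyclic type. Equivalently, one can invoke Proposition \ref{9.1} directly: the bi-skew brace $(G, \circ, \cdot)$ produced from the semidirect decomposition has additive group $(G,\cdot) \cong G$ and circle group $(G, \circ) \cong A \times (G/A) \cong C_n$, and a bi-skew brace with circle group $\cong C_n$ yields a cyclic-type Hopf Galois structure on any $G$-Galois extension. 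I do not expect any real obstacle here; the entire content sits in Corollary \ref{CRV} (equivalently Proposition \ref{9.1}) together with the elementary remark that abelian groups of squarefree order are cyclic, so the only thing to watch is the bookkeeping that makes $A \times (G/A)$ come out cyclic of order exactly $n$.
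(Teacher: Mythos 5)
Your proposal is correct and follows the paper's own route exactly: write $G = A \rtimes (G/A)$ via metabelianity and Schur--Zassenhaus, note that $A \times (G/A) \cong C_r \times C_s \cong C_n$ since abelian groups of squarefree order are cyclic and $\gcd(r,s)=1$, and apply Corollary \ref{CRV}. You have merely made explicit the identification $A \times (G/A) \cong C_n$ that the paper leaves implicit.
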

 \end{example} 
\begin{example}    In [BC12],  we let $L/K$ be a  Galois extension with Galois group $\Gamma$, a non-cyclic abelian $p$-group of order $p^n$, $n \ge 3$.  We showed that $L/K$ admits a nonabelian Hopf Galois structure.  The idea is to write $\Gamma = A \times B$, in such a way that there exists a non-abelian semidirect product $G = A \rtimes B$.  Thus $(\Gamma, G)$ is realizable.    It follows from Corollary \ref{CRV}  that $(G, \Gamma)$ is also realizable. More generally, 

\begin{corollary}  Given $G = A \rtimes B$ where $A$, $B$ are abelian, then $(G, A \times B)$ is realizable.  Thus a $G$-Galois extension admits an abelian Hopf Galois structure.  \end{corollary}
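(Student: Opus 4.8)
The plan is to obtain the statement as an immediate specialization of Corollary~\ref{CRV}. Given $G = A \rtimes B$ with $A$ normal, I would set $H = A$ and $J = B$ and simply quote Corollary~\ref{CRV}: it says that for any semidirect product $H \rtimes J$ the pair $(H \rtimes J,\, H \times J)$ is realizable, i.e.\ every $(H\rtimes J)$-Galois extension of fields carries a Hopf Galois structure of type $H \times J$. Here this reads: every $G$-Galois extension $K/k$ admits a Hopf Galois structure of type $A \times B$, which is the first assertion.

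For the second assertion I would then only need the trivial observation that $A \times B$ is abelian, being the direct product of the two abelian groups $A$ and $B$; hence the Hopf Galois structure just produced is of abelian type. If one wants the argument self-contained within this section rather than relying on the citation trail, I would instead run through Proposition~\ref{9.1} directly: writing $\cdot$ for the given operation on $G = A\rtimes B$ and defining $\circ$ by $x_L x_R^{-1} \circ y_L y_R^{-1} = x_L y_L (x_R y_R)^{-1}$, Proposition~\ref{9.1} gives that $(G,\circ,\cdot)$ is a bi-skew brace with $(G,\circ)\cong A\times B$; then Proposition~\ref{3.1}, applied with $(G,\cdot)$ as the additive group, places $\lambda_\circ(G)$ inside $\mathrm{Hol}(G,\cdot)$, so that $\lambda_\circ(G) \subset \mathrm{Perm}(G)$ is a regular subgroup isomorphic to $A\times B$ and normalized by $\lambda_\cdot(G)$, which by Galois descent [GP87] is precisely a Hopf Galois structure of type $A\times B$ on any $G$-Galois extension.

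I do not expect any genuine obstacle: the entire mathematical content sits in Proposition~\ref{9.1} (equivalently Corollary~\ref{CRV}), and all that is being added is the remark that a direct product of abelian groups is abelian. The one point I would state carefully is the choice of which factor plays the normal role, since the $\circ$-operation in Proposition~\ref{9.1} is not symmetric in the two complementary subgroups; taking $G_L = A$ and $G_R = B$ is forced by the hypothesis $G = A \rtimes B$, and no constraint such as coprimality of orders is needed.
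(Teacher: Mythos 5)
Your proof is correct and matches the paper's: the paper derives this corollary exactly as you do, as an immediate specialization of Corollary~\ref{CRV} (itself a consequence of Proposition~\ref{9.1}) with $H=A$, $J=B$, plus the observation that $A\times B$ is abelian. Your careful note about which factor must play the normal role is a sensible precaution, though as you say it is forced by the hypothesis.
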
 
\end{example} 

\begin{example}   Moving away from the abelian case, an obvious class of examples is to take $G = \mathrm{Hol}(N) \cong N \rtimes \mathrm{Aut}(N)$  for any finite group $N$.  Letting $\Gamma = N \times \mathrm{Aut}(N)$, we have that both $(\Gamma, G)$ and $(G, \Gamma)$ are realizable.  If $N$ is non-abelian, then both $\Gamma$ and $G$ are non-abelian.

For a small example, let $G = M_{(p)} \rtimes  \mathrm{Aut}(M_{(p)})$, and let $\Gamma = M_{(p)} \times \mathrm{Aut}(M_{(p)})$.  
Here $M_{(p)}$ is the Heisenberg group over $\mathbb{F}_p$, which can be identified as the subgroup of $\mathrm{GL}_3(\mathbb{F}_p)$ consisting of upper triangular $3 \times 3$ matrices with diagonal entries $= 1$, and $\mathrm{Aut}(M_{(p)})$ is isomorphic to $C_p^2 \rtimes \mathrm{GL}_2(\mathbb{F}_p)$ [Ze18].  Then $G$ is a bi-skew brace of order $p^6(p^2-1)(p-1)$ with both additive and circle groups non-abelian. 
\end{example} 
\begin{remark}  
We note that Proposition \ref{9.1} need not hold if $G = HJ$ and $H$ is not a normal subgroup of $G$.  For a small example,   let $G = S_4 = S_3 \cdot C_4$ where $S_3 = \mathrm{Perm}({1, 2, 3})$ and $C_4$ is generated by the cycle $(1, 2, 3, 4)$.   Letting $\star$ be the usual operation in $S_4$ and $\circ$ be the operation in $S_4$ induced from the componentwise operation on $S_3 \times C_4$,  $(S_4, \circ, \star)$ is a skew brace.   But, as is easily checked, the skew brace defining relation for $(S_4, \star, \circ)$ does not always hold.   For example, letting $x = x_R^{-1} = (1 2 3 4)$, $y = y_L = (1 2)$ and $z = z_R^{-1} = (13)(24)$  (cycle notation),  I found that $x\star (y \circ z) = (132)$, while 
\[ x \star y \circ \overline{x} \circ x \star z = (134).\]
\end{remark}

\begin{remark} We observe that there are bi-skew braces arising from radical algebras that do not arise from semidirect products.   

 A bi-skew brace arising from a semidirect product $G = H \rtimes J$ of groups as above has $(G, \star)$  a semidirect product and $(G, \circ)$ a direct product $H \times J$. Thus a bi-skew brace $(G, \circ, \star)$ arising from a semidirect product $H \rtimes J$ has the property that $H$ and $J$ are subgroups of both $(G, \circ)$ and $(G, \star)$, where the operations $\circ$ and $\star$ coincide on  $H$ and, if $J$ is abelian,  on $J$.  

A skew brace $(A, \circ, +)$ arising from a radical $\mathbb{F}_p$-algebra $(A, +, \cdot)$  has additive group $(A, +)$ isomorphic to $C_p^n$. Thus if the skew brace $(A, \circ, +)$ arising from a radical $\mathbb{F}_p$-algebra  also arose from a semidirect product, then the group $(A, +)$ would be an elementary abelian group, and the group $(A, \circ)$ would be a semidirect product of two elementary abelian subgroups, where on the subgroups, $\circ$ and $+$ are the same and the multiplication $\cdot$ is trivial.  

Consider the radical $\mathbb{F}_p$-algebra $A$ of dimension 6, with generators $x, y, z, a, b, c$  and relations
\[xy = a, yz =  b, zx = c\]
with all other products of generators $= 0$.  Then the center of $(A, \circ)$ is $\langle a, b, c \rangle = A^2$, on which $\circ$ is the same as $+$.

Let $\alpha = rx + sy + tz + d$ where $d$ is in $A^2$ and $r, s, t$ are in $\mathbb{F}_p$.    Then 
\[ \alpha \circ \alpha = \alpha + \alpha + rsa + stb + trc.\]
 So $\alpha + \alpha = \alpha \circ \alpha$ if and only if $\alpha\cdot \alpha = 0$, if and only if at least two of $r, s$ and $t$ are zero.  Also, since $x, y$ and  $z$ do not commute pairwise in $(A, \cdot)$, any subgroup of $(A, +)$ containing two of $x, y$ and $z$ will not have $\circ$ = $+$.

Thus if $H$ and $K$ are $\circ$-subgroups of $A$ on which $\circ = +$, then $H \times K$  can contain at most two of $x, y$ and $z$.  So $H \times K$ cannot equal $(A, +)$.
\end{remark}

\end{document}